
\documentclass[a4paper, 10pt]{ieeeconf}      

\IEEEoverridecommandlockouts                              
\overrideIEEEmargins

\usepackage{times} 
\usepackage{amsmath} 
\usepackage{amssymb}  

\newtheorem{definition}{Definition}
\newtheorem{proposition}{Proposition}
\newtheorem{theorem}{Theorem}
\newtheorem{lemma}{Lemma}
\newtheorem{corollary}{Corollary}
\newtheorem{remark}{Remark}
\newtheorem{example}{Example}

\newcommand{\defterm}[1]{\emph{#1}} 
\newcommand{\defabbr}[1]{\textbf{#1}} 
\newcommand{\ubgec}{\mathcal{U}^\infty_{\text{UBgE}}} 
\newcommand{\ucccpi}{\mathcal{U}^\infty_{\text{UCC+}}}

\newcommand{\cK}{\mathcal{K}} 
\newcommand{\cKi}{\mathcal{K}_\infty} 
\newcommand{\cKL}{\mathcal{KL}} 
\newcommand{\Rnn}{\mathbb{R}_{\geq 0}} 
\newcommand{\Rp}{\mathbb{R}_{>0}} 

\title{\LARGE \bf
Revisiting the derivation of stage costs in infinite horizon discrete-time optimal control
}

\author{Christian Fiedler* and Sebastian Trimpe*
\thanks{*Institute for Data Science in Mechanical Engineering, Faculty of Mechanical Engineering, RWTH Aachen University, Aachen, Germany
        {\tt\small \{christian.fiedler,trimpe\}@dsme.rwth-aachen.de}}%
}

\usepackage{fancyhdr}
\newcommand{\mytitle}{\textbf{Accepted version.} To appear in \emph{30th Mediterranean Control Conference 2022}.
\copyright 2022 IEEE. Personal use of this material is permitted. Permission
from IEEE must be obtained for all other uses, in any current or future
media, including reprinting/republishing this material for advertising or
promotional purposes, creating new collective works, for resale or
redistribution to servers or lists, or reuse of any copyrighted component of
this work in other works.}
\fancyhf{}		
\fancyfoot[L]{\normalfont \sffamily  \scriptsize \mytitle}		
\addtolength{\footskip}{-10pt}    

\begin{document}

\maketitle
\thispagestyle{fancy} 
\pagestyle{empty}

\begin{abstract}
In many applications of optimal control, the stage cost is not fixed, but rather a design choice with considerable impact on the control performance. In infinite horizon optimal control, the choice of stage cost is often restricted by the requirement of uniform cost controllability, which is nontrivial to satisfy. Here we revisit a previously proposed constructive technique for stage cost design. We generalize its setting, weaken the required assumptions and add additional flexibility. Furthermore, we show that the required assumptions essentially cannot be weakened anymore. 
By providing improved design options for stage costs, this work contributes to expanding the applicability of optimization-based control methodologies, in particular, model predictive control.
\end{abstract}

\section{INTRODUCTION} \label{sec.intro}
Infinite horizon optimal control problems are at the core of many modern control approaches. For example, model predictive control (MPC) can be interpreted as a finite horizon approximation to a given infinite horizon optimal control problem \cite[Section~5.4]{GruenePannek17}, \cite{Gruene16}. An important component of an optimal control problem is the stage cost, the instantaneous cost of an applied control input and the current state of the system. In many control applications, the stage cost is not completely specified, but rather a design parameter. Examples include automatic controller tuning \cite{MarcoEtAl16} and MPC without terminal conditions \cite[Chapter~6]{GruenePannek17}. Especially in the latter case, goals like stabilization and constraint satisfaction are primary, and the concrete choice of a stage cost is secondary. The major technical challenge is then the design of a stage cost leading to a uniformly bounded value function of the corresponding infinite horizon optimal control problem, i.e., uniform cost controllability, cf. Section \ref{sec.controllability} for details.

A constructive method is described in \cite[Section~4.1]{GruenePannek17} that allows to derive a stage cost from certain uniform controllability assumptions, leading to uniform cost controllability. Due to the relevance of good stage cost designs, we revisit this technique in the present work. In particular, we use a more general setup and weaken the necessary controllability assumptions. Furthermore, we allow for more flexibility in the design of the stage cost and cover explicitly the case of interaction terms between state and control, which seems to have not been discussed so far. Finally, by providing a converse result, we show that the controllability assumptions essentially cannot be further weakened in general, an observation that has not been explicitly made before, to the best of our knowledge.

\textbf{Outline} In Section \ref{sec.preliminaries}, we introduce the problem setting and provide some necessary technical background. The relevant controllability assumptions are introduced and investigated in Section \ref{sec.controllability} and then used in Section \ref{sec.stagecosts} to derive suitable stage costs. We demonstrate in Section \ref{sec.converse} that the utilized controllability assumptions are indeed reasonable in this context. Section \ref{sec.conclusion} concludes with some discussion and directions for future work.

\section{PRELIMINARIES} \label{sec.preliminaries}
We consider discrete-time nonlinear control systems
\begin{equation} \label{eq.sys}
x_+ = f(x,u)
\end{equation}
described by a transition function $f: X \times U \rightarrow X$. 
The state space $X$ and the set of control inputs $U$ can be arbitrary nonempty sets. 
As usual, for a given input sequence $u$, the resulting trajectory is defined by
\begin{align*}
\phi(0;x,u) & = x\\
\phi(n+1;x,u) & = f(\phi(n;x,u),u(n)),\:n\geq 0.
\end{align*}
Similar to for example \cite{GrimmEtAl05} or \cite{TranKellettDower15}, we work in a very general setting using a \emph{state measure} $\sigma: X \rightarrow \mathbb{R}_{\geq 0}$ and an \emph{input} or \emph{control measure} $\rho: U \rightarrow \mathbb{R}_{\geq 0}$. For the following developments, we do not require any special properties of $\sigma$ and $\rho$.
\begin{example} \label{ex.statecontrolmeasures}
This general setup covers most relevant settings, including the following cases:

\emph{1)} $(X,d_X)$ a metric space, $A \subseteq X$ a nonempty, compact subset, and
	\begin{equation*}
		\sigma(x) = \inf_{x^\prime \in A} d_X(x,x^\prime),
	\end{equation*}
	i.e., $\sigma(x)$ is the distance of $x$ to the set $A$. A very important special case is $A=\{ x_\ast \}$ for some $x_\ast \in X$, where $\sigma(x)=d_X(x,x_\ast)$.

\emph{2)} $(X,d_X)$ and $(U,d_U)$ metric spaces,  $(x_\ast,u_\ast)\in X \times U$ an equilibrium pair, i.e., $x_\ast=f(x_\ast,u_\ast)$, and
	\begin{equation*}
		\sigma(x) = d_X(x,x_\ast), \qquad \rho(u) = d_U(u,u_\ast).
	\end{equation*}
	This is essentially the (time-invariant version of the) setting in \cite[Section~4.1]{GruenePannek17}.

\emph{3)} Probably the most common case is $X=\mathbb{R}^n$, $U=\mathbb{R}^m$, $(0,0) \in \mathbb{R}^n \times \mathbb{R}^m$ an equilibrium and
	\begin{equation*}
		\sigma(x)  = \|x\|_2, \qquad \rho(u) = \|u\|_2.
	\end{equation*}
\end{example}

For a stage cost $\ell: X \times U \rightarrow \Rnn$ and horizon length $N \in \mathbb{N}_+ \cup \{ \infty \}$, define the total cost for initial state $x \in X$ and input sequence $u \in U^N$ (setting $U^\infty=U^{\mathbb{N}}$) by
\begin{equation}
J_N(x,u \mid \ell) = \sum_{n=0}^{N-1} \ell(\phi(n;x,u), u(n)).
\end{equation}

Finally, we use extensively the popular comparison function formalism. 
The class $\cK$ consists of all functions $\alpha : \Rnn \rightarrow \Rnn$ with $\alpha(0)=0$ and that are continuous and strictly increasing. 
The class $\cKi$ consists of functions $\alpha \in \cK$ such that $\lim_{r \rightarrow \infty} \alpha(r) = \infty$ and the class $\cKL$ consists\footnote{Unfortunately the definition of this class is not uniform in the literature, but most versions are compatible with the results that follow.} of continuous functions $\beta: \Rnn \times \Rnn \rightarrow \Rnn$ such that for all $t \in \Rnn$ we have $\beta(\cdot,t) \in \cKi$ and for all $r \in \Rp$ the function $\beta(r,\cdot)$ is strictly decreasing and $\lim_{t \rightarrow \infty} \beta(r,t) = 0$.
To improve readibility, we often apply pointwise unary and binary operations as well as relations to comparison functions without making the argument explicit. For example, for $\alpha_1,\alpha_2 \in \cKi$, the expression $\alpha_1 \leq \alpha_2$ means $\forall r  \in \Rnn: \alpha_1(r) \leq \alpha_2(r)$.

The next lemma collects some properties of comparison functions that are needed in the remainder of this work. For proofs and further background, see for example \cite{Kellett14}.
\begin{lemma} 
\begin{enumerate}
\item All $\alpha \in \cKi$ are invertible with $\alpha^{-1} \in \cKi$.
\item For all $\alpha_1,\alpha_2 \in \cKi$, we also have $\alpha_1 \cdot \alpha_2 \in \cKi$ and $\alpha_1 \circ \alpha_2 \in \cKi$.
\item For all $\alpha_1,\alpha_2 \in \cKi$ and $C_1,C_2 \in\Rp$ we have $C_1\alpha_1 + C_2\alpha_2 \in \cKi$.
\item (Weak triangle inequality) For all $\alpha\in\cKi$ and $a,b\in\Rnn$, we have $\alpha(a+b)\leq \alpha(2a) + \alpha(2b)$.
\item (Sontag's $\cKL$-Lemma) Let $\beta \in \cKL$ and $\theta \in (0,1)$ be arbitrary. Then there exist $\gamma_1,\gamma_2\in\cKi$ such that for all $r,t \in \Rnn$ we have $\beta(r,t)\leq \gamma_2(\theta^t\gamma_1(r))$. We call this a $\cKL$-decomposition in the following.
\end{enumerate}
\end{lemma}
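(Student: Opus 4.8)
The plan is to check the five items in turn; the first four are routine real analysis, and only the last (Sontag's Lemma) requires a genuine construction, so that is where the work concentrates.

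For item~1, recall that $\alpha\in\cKi$ is continuous and strictly increasing with $\alpha(0)=0$ and $\lim_{r\to\infty}\alpha(r)=\infty$; by the intermediate value theorem its image is exactly $\Rnn$, so $\alpha$ is a bijection of $\Rnn$. The inverse of a continuous strictly increasing bijection between intervals is again continuous and strictly increasing, $\alpha^{-1}(0)=0$ is immediate, and $\alpha^{-1}$ is unbounded because $\alpha$ takes finite values at finite arguments; hence $\alpha^{-1}\in\cKi$. For items~2 and~3 I would simply verify the four defining properties: continuity is preserved under products, compositions, and finite positive linear combinations; the value $0$ is attained at $0$ in every case; strict monotonicity of $\alpha_1\cdot\alpha_2$ follows by distinguishing $r=0$ (where the product jumps from $0$ to a positive value since $\alpha_i(r)>0$ for $r>0$) from $r>0$ (where both factors are positive and strictly increasing), while for $\alpha_1\circ\alpha_2$ and for $C_1\alpha_1+C_2\alpha_2$ it is immediate; and each construction diverges to $\infty$ since its building blocks do. Item~4 is the chain $\alpha(a+b)\le\alpha(2\max\{a,b\})=\max\{\alpha(2a),\alpha(2b)\}\le\alpha(2a)+\alpha(2b)$, using $a+b\le 2\max\{a,b\}$, monotonicity of $\alpha$, and $\alpha\ge 0$.

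The substance is item~5. The plan is first to reduce the arbitrary base $\theta\in(0,1)$ to the base $e^{-1}$: given $\beta\in\cKL$ and $\theta$, set $c=\ln(1/\theta)>0$ and $\tilde\beta(r,t):=\beta(r,t/c)$, which is again in $\cKL$; a bound $\tilde\beta(r,t)\le\gamma_2(e^{-t}\gamma_1(r))$ becomes, after substituting $t=cs$, exactly $\beta(r,s)\le\gamma_2(\theta^{s}\gamma_1(r))$. It then remains to produce, for an arbitrary $\beta\in\cKL$, functions $\gamma_1,\gamma_2\in\cKi$ with $\beta(r,s)\le\gamma_2(e^{-s}\gamma_1(r))$. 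This is the classical Sontag $\cKL$-Lemma; here I would either cite its proof directly (e.g.\ from \cite{Kellett14}) or reconstruct it along the standard lines -- take a strictly increasing majorant of $\beta(\cdot,0)$ for $\gamma_1$, and build $\gamma_2$ as a class-$\cKi$ majorant of the function that encodes how slowly $\beta$ may decay (morally, an inverse of the map sending a level to the time at which $\beta$ first drops below it), the delicate point being to smooth both majorants so they genuinely lie in $\cKi$, i.e.\ are continuous, strictly increasing, vanish at $0$, and are unbounded.

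The main obstacle is precisely this last construction: items~1--4 are bookkeeping, whereas a from-scratch proof of Sontag's Lemma needs the careful majorization-and-smoothing argument that yields honest class-$\cKi$ functions, which is why one normally quotes it rather than reproving it.
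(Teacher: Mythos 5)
Your proposal is correct: items 1--4 are verified by exactly the routine arguments one would expect, your base-change reduction $\tilde\beta(r,t)=\beta(r,t/c)$ with $c=\ln(1/\theta)$ for item 5 is valid, and the paper itself gives no proof of this lemma at all, deferring entirely to \cite{Kellett14} --- which is also where you (reasonably) send the one genuinely nontrivial step, Sontag's $\cKL$-Lemma. So your treatment matches, and in fact slightly exceeds, the level of detail the paper provides.
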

\section{ASYMPTOTIC AND COST CONTROLLABILITY CONDITIONS} \label{sec.controllability}
We now summarize the controllability notions used in the sequel. Note that we use some slightly nonstandard, but descriptive terms.
\begin{definition} \label{def.uac}
Let $\mathbb{X} \subseteq X$. We say that system \eqref{eq.sys} is
\begin{enumerate}
\item  \defterm{uniformly asymptotically controllable} (\defabbr{UAC}) on $\mathbb{X}$ if there exists $\beta \in \cKL$ such that for all $x \in \mathbb{X}$ there exists $u_x \in U^\infty$ with
\begin{equation}
\sigma(\phi(n;x,u_x)) \leq \beta(\sigma(x),n) \: \forall n\geq 0
\end{equation}
\item \defterm{uniformly asymptotically controllable with uniformly vanishing controls} (\defabbr{UACwUVC}) on $\mathbb{X}$ if there exist $\beta_x,\beta_u \in \cKL$ such that for all $x \in \mathbb{X}$ there exists $u_x \in U^\infty$ with
\begin{align}
\sigma(\phi(n;x,u_x)) & \leq \beta_x(\sigma(x),n) \: \forall n\geq 0 \\
\rho(u_x(n)) & \leq \beta_u(\sigma(x),n) \: \forall n\geq 0 \label{eq.uacuvc.2}
\end{align}
\item \defterm{uniformly asymptotically controllable with uniformly bounded generalized energy controls} (\defabbr{UACwUBgEC}) on $\mathbb{X}$ if there exist $\beta \in \cKL$, $\eta, \gamma \in \cKi$ such that
$\forall x \in \mathbb{X}$ there exists $u_x \in U^\infty$ with
\begin{align} \label{eq.uacube.1}
&\sigma(\phi(n;x,u_x))) \leq \beta(\sigma(x), n) \: \forall n \geq 0\\
&\sum_{n=0}^{N-1} \eta(\rho(u_x(n))) \leq \gamma(\sigma(x)) \: \forall N \geq 1 \label{eq.uacube.2}
\end{align}
For convenience, denote the set of all such control sequences by $\ubgec(x)$.
\end{enumerate}
\end{definition}
\begin{remark}
If the system is UACwUVC, then for all $w_1,w_2 \in \Rnn$ there exists $\beta \in \mathcal{KL}$ such that for all $x \in \mathbb{X}$ there exists $u_x \in U^\infty$ with
\begin{equation*} \forall n \geq 0: 
w_1 \sigma(x(n;x,u_x)) + w_2\rho(u_x(n)) \leq \beta(\sigma(x),n)
\end{equation*}
(simply use $\beta(r,t)=\max\{w_1,1\} \beta_x(r,t) + \max\{w_2,1\}\beta_u(r,t)$). On the other hand, if there are $w_1,w_2 \in \Rp$ and $\beta \in \mathcal{KL}$ such that for all $x \in \mathbb{X}$ there exists $u_x \in U^\infty$ with
\begin{equation*}
\forall n \geq 0: \:
w_1 \sigma(\phi(n;x,u_x)) + w_2\rho(u_x(n)) \leq \beta(\sigma(x),n),
\end{equation*}
then the system is UACwUVC (use $\beta_x(r,t)=\beta_u(r,t)=\max\{1/w_1,1/w_2\}\beta(r,t)$).
This shows that in the setting of Example \ref{ex.statecontrolmeasures}, item 2), the definition of UAC and UACwUVC reduces to \emph{uniform controllability with the small control property} as described in \cite[Definition~4.1]{GruenePannek17}. 
\end{remark}
\begin{remark}
The definition of UACwUBgEC is motivated by the case $\eta(r)=r^2$ which can be interpreted as physical energy. Note that we focus on $\eta \in \mathcal{K}_\infty$ since this aligns closest with the interpretation of generalized energy. Conceptually, our definition is similar to \emph{summability} properties, cf. e.g. \cite{TeelEtAl10}, but we are not aware of previous usages in the context of asymptotic controllability.
\end{remark}
Next, we clarify some relations between the different concepts from Definition \ref{def.uac}.
\begin{proposition} \label{prop.UVAimpliesUFgEC}
\begin{enumerate}
\item If system \eqref{eq.sys} is UACwUVC on $\mathbb{X}$, then it is also UACwUBgEC on $\mathbb{X}$.
\item If system \eqref{eq.sys} is UACwUBgEC on $\mathbb{X}$, then it is UAC.
\end{enumerate}
\end{proposition}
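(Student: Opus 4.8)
The first implication is essentially a convergence argument for a geometric series. Suppose the system is UACwUVC on $\mathbb{X}$ with witnesses $\beta_x,\beta_u\in\cKL$; I keep $u_x$ from UACwUVC and must produce $\eta,\gamma\in\cKi$ satisfying \eqref{eq.uacube.1}--\eqref{eq.uacube.2}. The state bound \eqref{eq.uacube.1} holds with $\beta=\beta_x$, so the work is entirely in the energy bound. The plan is: apply Sontag's $\cKL$-Lemma (Lemma, item 5) to $\beta_u$ with some fixed $\theta\in(0,1)$, obtaining $\gamma_1,\gamma_2\in\cKi$ with $\rho(u_x(n))\leq\gamma_2(\theta^n\gamma_1(\sigma(x)))$. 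I then want to choose $\eta\in\cKi$ growing fast enough that $\sum_n\eta(\rho(u_x(n)))$ converges and is dominated by a $\cK_\infty$ function of $\sigma(x)$. The natural choice is $\eta:=\gamma_2^{-1}$ (which lies in $\cKi$ by Lemma items 1), giving $\eta(\rho(u_x(n)))\leq\theta^n\gamma_1(\sigma(x))$, hence $\sum_{n=0}^{N-1}\eta(\rho(u_x(n)))\leq\frac{1}{1-\theta}\gamma_1(\sigma(x))=:\gamma(\sigma(x))$, and $\gamma\in\cKi$ by Lemma item 3. One subtlety: $\eta=\gamma_2^{-1}$ is the designer's choice of generalized-energy function, which is fine since Definition \ref{def.uac}(3) only asks for \emph{existence} of such an $\eta$; if one insisted on a prescribed $\eta$ this step would fail, but as stated there is nothing more to do.

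The second implication is immediate: if the system is UACwUBgEC on $\mathbb{X}$ with witness $\beta\in\cKL$ (and some $\eta,\gamma$), then for every $x\in\mathbb{X}$ the same control $u_x\in\ubgec(x)$ satisfies $\sigma(\phi(n;x,u_x))\leq\beta(\sigma(x),n)$ for all $n\geq0$ by \eqref{eq.uacube.1}, which is exactly the defining inequality of UAC with the same $\beta$. So one simply discards the energy condition. (Strictly, UAC is defined with $\mathbb{X}=X$ in the statement of Proposition \ref{prop.UVAimpliesUFgEC}, but the argument shows UAC holds on whatever set $\mathbb{X}$ the UACwUBgEC hypothesis was assumed on; I would phrase it accordingly, or note that the intended reading of item 2 carries $\mathbb{X}$ through.)

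I do not anticipate a genuine obstacle here — both parts are short. The only point requiring a little care is the first one: making sure the chosen $\eta$ really lands in $\cK_\infty$ and that the geometric sum bound is uniform over $x$ and $N$. Using $\eta=\gamma_2^{-1}$ handles both cleanly, since $\gamma_2^{-1}\in\cKi$ by the inverse property and the bound $\sum_{n=0}^{N-1}\theta^n\gamma_1(\sigma(x))\leq(1-\theta)^{-1}\gamma_1(\sigma(x))$ is independent of $N$ and is a $\cK_\infty$ function of $\sigma(x)$ by closure under positive scaling. If one wanted a ``cleaner-looking'' energy function such as $\eta(r)=r^p$, one would instead need a polynomial lower bound on $\gamma_2^{-1}$ near $0$, which need not exist in general, so I would stick with $\eta=\gamma_2^{-1}$.
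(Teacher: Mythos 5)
Your proposal is correct and follows essentially the same route as the paper's proof: a Sontag $\cKL$-decomposition of $\beta_u$, the choice $\eta=\gamma_2^{-1}$ (the paper allows any $\eta\in\cKi$ with $\eta\leq\gamma_2^{-1}$, noting $\gamma_2^{-1}$ itself works), and the geometric-series bound yielding $\gamma=\frac{1}{1-\theta}\gamma_1$; the second item is likewise dismissed as immediate from the definitions. Nothing further is needed.
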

\begin{proof}
1) For $\beta\in\cKL$ in \eqref{eq.uacube.1} we can choose $\beta=\beta_x$.
For \eqref{eq.uacube.2}, we use the technique from \cite[Theorem~4.3]{GruenePannek17}: 
For $\beta_u$ from the definition of UACwUVC, let $\beta_u(r,n) \leq \gamma_2(\theta^n \gamma_1(r))$ be any $\cKL$-decomposition, and choose any $\eta \in \cK_\infty$ with $\eta \leq \gamma_2^{-1}$ ($\eta=\gamma_2^{-1}$ works for example), resulting in
\begin{equation*}
\eta(\beta_u(r,n)) \leq \eta\big(\gamma_2(\theta^n \gamma_1(r))\big) \leq \theta^n \gamma_1(r)  \: \forall r \geq 0,\forall n \in \mathbb{N}.
\end{equation*}
Let $x \in \mathbb{X}$ be arbitrary and choose a $u_x\in U^\infty$ fulfilling \eqref{eq.uacuvc.2}, then we have for $N\geq 1$
\begin{align*}
\sum_{n=0}^{N-1} \eta(\rho(u(n)))
& \leq \sum_{n=0}^{N-1} \theta^n \gamma_1(\sigma(x)) \leq \frac{1}{1-\theta} \gamma_1(\sigma(x)),
\end{align*}
hence UACwUBgEC with $\gamma= \frac{1}{1-\theta} \gamma_1$.

2) Clear from the definitions.
\end{proof}
In particular, Proposition \ref{prop.UVAimpliesUFgEC} shows that the UACwUVC property is at least as strong as UACwUBgEC.
\begin{remark}
We almost have a converse of the first implication in the preceding proposition. If the system is UACwUBgEC, then
$\sum_{n=0}^\infty \eta\big(\rho(u(n))\big) \leq \gamma(\sigma(x))<\infty$ and $\eta\in\cKi$ imply that $\rho(u(n))\rightarrow 0$ for $n \rightarrow \infty$, 
but the convergence rate might depend on $x$ and $u_x$, and hence we might not have uniformity.
We conjecture that under mild compactness and continuity assumptions we can ensure this uniformity and hence UACwUBgEC and UACwUVC become equivalent.
\end{remark}
\begin{definition}
We say that system \eqref{eq.sys} is \defterm{uniformly cost controllable} (\defabbr{UCC}) on $\mathbb{X}\subseteq X$ with respect to (w.r.t.) the stage cost $\ell: X \times U \rightarrow \Rnn$ and total cost bound $\bar{\alpha}\in\cKi$ if for all $x \in \mathbb{X}$ there exists $u_x \in U^\infty$ such that
\begin{equation}
J_\infty(x,u_x \mid \ell) \leq \bar{\alpha}(\sigma(x)).
\end{equation}
\phantom{.}
\end{definition}
The UCC property is sometimes referred to as \defterm{weak controllability}, see for example \cite[Assumption~2.17]{RawlingsMayneDiehl17}.
\section{DERIVING STAGE COSTS FROM ASYMPTOTIC CONTROLLABILITY} \label{sec.stagecosts}
The starting point is the derivation of stage costs of the form $\ell(x,u)=q(\sigma(x))+r(\rho(u))$ for $q \in \cKi$ and some $r:\Rnn\rightarrow\Rnn$. 
The following result is a direct adaption of \cite[Theorem~4.3]{GruenePannek17} to the present setting, allowing for additional flexibility in the choice of $q$ and $r$ and requiring only UACwUBgEC instead of UACwUVC.
\begin{theorem}\label{thm.stagecostFromUAC}
Let $\mathbb{X} \subseteq X$, $\beta \in \cKL$, $\eta: \Rnn \rightarrow \Rnn$ some function, $\gamma \in \cKi$, such that for all $x \in \mathbb{X}$ there exists $u_x \in U^\infty$ with
\begin{enumerate}
	\item $ \forall n \geq 0:\: \sigma\big(\phi(n;x,u_x)\big) \leq \beta(\sigma(x), n)$
	\item $\forall N \geq 1: \: \sum_{n=0}^{N-1} \eta\big(\rho(u_x(n))\big) \leq \gamma(\sigma(x))$
\end{enumerate}
Then there exists $q \in \cKi$, $r: \Rnn \rightarrow \Rnn$ and $\bar{\alpha} \in \cKi$ such that system \eqref{eq.sys} is uniformly cost controllable on $\mathbb{X}$ w.r.t. $\ell(x,u)=q(\sigma(x)) + r(\rho(u))$ and total cost bound $\bar{\alpha}$. Additionally,
\begin{enumerate}
\item if $\eta$ is positive definite, then we can choose $r$ to be positive definite.
\item if $\eta \in \cKi$, i.e., the system is UACwUBgEC, then we can choose $r \in \cKi$.
\end{enumerate}

More explicitly, for any $\cKL$-decomposition $\beta(r,n)\leq \gamma_2(\theta^n\gamma_1(r))$ $\forall r \geq 0$, $n\in\mathbb{N}$, 
all $q \in \cKi$ with $q \leq C_q \cdot \gamma_2^{-1}$ for some $C_q >0$ ($q=\gamma_2^{-1}$ works), 
all $r: \Rnn \rightarrow \Rnn$ with $r \leq C_r \cdot \eta$ for some $C_r >0$ ($r=\eta$ works),
we have UCC w.r.t. $\ell(x,u)=q(\sigma(x)) + r(\rho(u))$ with $\bar{\alpha}=\frac{C_q}{1-\theta}\gamma_1 + C_r \cdot \gamma$.
\end{theorem}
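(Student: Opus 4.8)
The plan is to verify directly that, with the indicated choices of $q$, $r$, and $\bar\alpha$, the infinite-horizon cost of the control sequence $u_x$ (the one furnished by hypotheses 1) and 2) for a given $x \in \mathbb{X}$) is bounded by $\bar\alpha(\sigma(x))$. Concretely, fix $x \in \mathbb{X}$ and the associated $u_x$, and split the stage cost into its state part and its control part:
\begin{equation*}
J_\infty(x,u_x \mid \ell) = \sum_{n=0}^{\infty} q\big(\sigma(\phi(n;x,u_x))\big) + \sum_{n=0}^{\infty} r\big(\rho(u_x(n))\big).
\end{equation*}
The first step is to bound the state sum. Using $q \leq C_q\,\gamma_2^{-1}$, monotonicity of $q$ (since $q \in \cKi$), hypothesis 1), and the $\cKL$-decomposition $\beta(\sigma(x),n) \leq \gamma_2(\theta^n \gamma_1(\sigma(x)))$, we get $q(\sigma(\phi(n;x,u_x))) \leq q(\beta(\sigma(x),n)) \leq C_q \gamma_2^{-1}(\gamma_2(\theta^n \gamma_1(\sigma(x)))) = C_q \theta^n \gamma_1(\sigma(x))$; summing the geometric series in $\theta \in (0,1)$ yields $\sum_n q(\sigma(\phi(n;x,u_x))) \leq \frac{C_q}{1-\theta}\gamma_1(\sigma(x))$.

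The second step is to bound the control sum. From $r \leq C_r\,\eta$ and hypothesis 2) applied for every $N \geq 1$, we have $\sum_{n=0}^{N-1} r(\rho(u_x(n))) \leq C_r \sum_{n=0}^{N-1} \eta(\rho(u_x(n))) \leq C_r\,\gamma(\sigma(x))$; letting $N \to \infty$ (the partial sums are nondecreasing and bounded) gives $\sum_{n=0}^{\infty} r(\rho(u_x(n))) \leq C_r\,\gamma(\sigma(x))$. Adding the two bounds gives $J_\infty(x,u_x \mid \ell) \leq \big(\frac{C_q}{1-\theta}\gamma_1 + C_r\,\gamma\big)(\sigma(x)) = \bar\alpha(\sigma(x))$, which is exactly the UCC inequality; since $x$ was arbitrary this establishes UCC on $\mathbb{X}$.

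It remains to check the class memberships: $\bar\alpha = \frac{C_q}{1-\theta}\gamma_1 + C_r\,\gamma \in \cKi$ follows from Lemma item 3 (positive combination of $\cKi$ functions), and $q \in \cKi$ holds by assumption; for the two refinements, if $\eta$ is positive definite then $r = \eta$ (or any $r \leq C_r\eta$ that one arranges to be positive definite, e.g. $r=\eta$) is positive definite, and if $\eta \in \cKi$ then $r = \eta \in \cKi$. Finally one should note the existence claim is subsumed by the explicit construction, taking $q = \gamma_2^{-1}$ (valid by Lemma item 1), $r = \eta$, and $\bar\alpha$ as above. The only mild subtlety — hardly an obstacle — is justifying the passage from the finite-$N$ bound in hypothesis 2) to the infinite sum and the interchange implicit in writing $J_\infty$ as a sum of two separate series; both are legitimate because all terms are nonnegative, so the series converge (absolutely) in $[0,\infty]$ and rearrangement is unproblematic. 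There is no genuinely hard step here; the content of the theorem is entirely in identifying the right decomposition and the flexibility afforded by the constants $C_q, C_r$.
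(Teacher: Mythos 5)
Your proof is correct and follows essentially the same route as the paper's: bound the state term via the $\cKL$-decomposition and the choice $q \leq C_q\gamma_2^{-1}$ to obtain a geometric series, bound the control term via $r \leq C_r\eta$ and hypothesis 2), and sum. The only cosmetic difference is that you split $J_\infty$ into two separate series up front while the paper bounds the finite partial sums $J_N$ and then lets $N \to \infty$; both are justified by nonnegativity, as you note.
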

\begin{proof}
Let $x \in \mathbb{X}$ be arbitrary and choose a corresponding $u_x \in U^\infty$.
For any $n\geq0$ we then have
\begin{align*}
q\big(\sigma(\phi(n;x,u_x))\big) & \leq q\big(\beta(\sigma(x),n)\big) \\
& \leq q\Big(\gamma_2\big(\theta^n\gamma_1(\sigma(x))\big)\Big) \\
& \leq C_q \theta^n \gamma_1(\sigma(x)),
\end{align*}
where we used the monotonicity of $q$ together with the $\cKL$-decomposition of $\beta$ and the choice $q \leq C_q \cdot \gamma_2^{-1}$.
For $N\geq 1$ we can continue with
\begin{align*}
J_N(x, u_x \mid \ell) & = \sum_{n=0}^{N-1} \ell(\phi(n;x,u_x), u_x(n)) \\
& =  \sum_{n=0}^{N-1} q\big(\sigma(x(n;x,u_x))\big) + r\big(\rho( u_x(n))\big) \\
& \leq \sum_{n=0}^{N-1} C_q \theta^n \gamma_1(\sigma(x)) + C_r \sum_{n=0}^{N-1} \eta\big(\rho(u_x(n))\big) \\
& \leq \frac{C_q}{1-\theta}\gamma_1(\sigma(x)) + C_r \gamma(\sigma(x)),
\end{align*}
where we used additionally the choice $r \leq C_r \cdot \eta$ and \eqref{eq.uacube.2}.
Since the last term is independent of $N$ (together with nonnegativity of $\ell$) we get convergence of the total cost together with the bound
\begin{equation*}
J_\infty(x,u_x \mid \ell) \leq \left( \frac{C_q}{1-\theta}\gamma_1 + C_r \gamma\right)(\sigma(x)) =: \bar{\alpha}(\sigma(x)).
\end{equation*}
The remaining claims are clear.
\end{proof}
\begin{remark}
The existence of $\underline{\alpha}\in\cKi$ with $\ell(x,u)\geq\underline{\alpha}(\sigma(x))$ for all $(x,u)$ is crucial for stabilization tasks, cf. \cite[Chapter~4]{GruenePannek17}. Since $q\in\cKi$, this is trivially satisfied here (actually, $q$ could be replaced by any $\tilde{q}\leq q$ with $\tilde{q}\geq\underline{\alpha}$ for some $\underline{\alpha}\in\cKi$).
\end{remark}
%
%
We now investigate which interaction terms between $x$ and $u$ might be possible while still ensuring UCC.
The following simple result already allows a considerable amount of flexibility.
\begin{proposition} \label{prop.simpleinteractionterms}
Consider the situation of Theorem \ref{thm.stagecostFromUAC} for the case of UACwUBgEC, and construct $q$ and $r$ as described there.
Let $s:X \times U \rightarrow \mathbb{R}_{\geq 0}$ be any function such that there exists $C_1,C_2,C_3 \in \Rnn$ and $\alpha\in\mathcal{K}_\infty$ such that for all $(x,u)\in X\times U$
\begin{equation*}
s(x,u)\leq C_1\gamma_2^{-1}(\sigma(x)) + C_2 \eta(\rho(u)) + C_3 \gamma_2^{-1}(\sigma(x))\alpha(\rho(u)).
\end{equation*}
Then $\mathbb{X}$ is UCC w.r.t. $\ell(x,u) = q(\sigma(x)) + r(\rho(u)) + s(x,u)$ and total cost bound
\begin{equation*}
\bar{\alpha} = \frac{C_q+C_1}{1-\theta}\gamma_1 + (C_r+C_2) \cdot \gamma + \frac{C_3}{1-\theta}\gamma_1 \cdot (\alpha \circ \eta^{-1} \circ \gamma)
\end{equation*}
\end{proposition}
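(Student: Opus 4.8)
The plan is to follow the proof of Theorem~\ref{thm.stagecostFromUAC} almost verbatim and only add the bookkeeping needed for the extra term $s$. Fix an arbitrary $x \in \mathbb{X}$ and, using UACwUBgEC, pick $u_x \in \ubgec(x)$, i.e., a control sequence satisfying \eqref{eq.uacube.1}--\eqref{eq.uacube.2}. Abbreviate $x_n := \phi(n;x,u_x)$ and $u_n := u_x(n)$, and fix the $\cKL$-decomposition $\beta(r,n)\leq\gamma_2(\theta^n\gamma_1(r))$ together with the constants $C_q, C_r$ from the construction of $q$ and $r$. For every $N \geq 1$ one splits
\begin{equation*}
J_N(x,u_x \mid \ell) = \sum_{n=0}^{N-1} q(\sigma(x_n)) + \sum_{n=0}^{N-1} r(\rho(u_n)) + \sum_{n=0}^{N-1} s(x_n,u_n)
\end{equation*}
and bounds each piece separately.

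The first two sums are exactly the ones treated in the proof of Theorem~\ref{thm.stagecostFromUAC}, so they contribute at most $\frac{C_q}{1-\theta}\gamma_1(\sigma(x))$ and $C_r\gamma(\sigma(x))$, respectively. For the third sum I would invoke the hypothesis on $s$ to reduce to three sums. The term $\sum_n C_1 \gamma_2^{-1}(\sigma(x_n))$ is handled by the same estimate as $q$: from \eqref{eq.uacube.1}, monotonicity of $\gamma_2^{-1}$, and the $\cKL$-decomposition, $\gamma_2^{-1}(\sigma(x_n)) \leq \gamma_2^{-1}(\beta(\sigma(x),n)) \leq \theta^n\gamma_1(\sigma(x))$, so summing the geometric series gives $\frac{C_1}{1-\theta}\gamma_1(\sigma(x))$. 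The term $\sum_n C_2\eta(\rho(u_n))$ is at most $C_2\gamma(\sigma(x))$ directly by \eqref{eq.uacube.2}.

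The only step that needs a genuine idea is the mixed term $\sum_n C_3 \gamma_2^{-1}(\sigma(x_n))\alpha(\rho(u_n))$: bounding $\theta^n \leq 1$ is too lossy, since nothing forces $\sum_n \alpha(\rho(u_n))$ to converge. Instead I would keep the geometric factor coming from the state bound and control the $\alpha$-factor by a single uniform estimate: \eqref{eq.uacube.2} gives $\eta(\rho(u_n)) \leq \gamma(\sigma(x))$ for every $n$, hence (using $\eta \in \cKi$, so $\eta^{-1} \in \cKi$) $\rho(u_n) \leq \eta^{-1}(\gamma(\sigma(x)))$ and therefore $\alpha(\rho(u_n)) \leq (\alpha\circ\eta^{-1}\circ\gamma)(\sigma(x))$, which is independent of $n$. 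Combined with $\gamma_2^{-1}(\sigma(x_n)) \leq \theta^n\gamma_1(\sigma(x))$ and $\sum_n \theta^n \leq \frac{1}{1-\theta}$, the mixed term contributes at most $\frac{C_3}{1-\theta}\gamma_1(\sigma(x))\cdot(\alpha\circ\eta^{-1}\circ\gamma)(\sigma(x))$.

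Adding the five contributions yields $J_N(x,u_x \mid \ell) \leq \bar{\alpha}(\sigma(x))$ with the stated $\bar{\alpha}$; since the bound is uniform in $N$ and $\ell \geq 0$, the total cost converges with $J_\infty(x,u_x \mid \ell) \leq \bar{\alpha}(\sigma(x))$, which is the claimed UCC property. That $\bar{\alpha} \in \cKi$ follows from the closure of $\cKi$ under sums, products, and compositions collected in the preliminary lemma (the last summand being a product of $\gamma_1 \in \cKi$ with the composition $\alpha\circ\eta^{-1}\circ\gamma \in \cKi$). So the anticipated main obstacle — and really the heart of the argument — is just this asymmetric split of the summability in the mixed term: geometric decay from the state, uniform boundedness from the generalized-energy bound.
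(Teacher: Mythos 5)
Your proposal is correct and follows essentially the same route as the paper's proof: the same term-by-term decomposition, the same geometric estimate $\gamma_2^{-1}(\sigma(x_n))\leq\theta^n\gamma_1(\sigma(x))$, and the same key step of bounding $\alpha(\rho(u_n))$ uniformly in $n$ by $(\alpha\circ\eta^{-1}\circ\gamma)(\sigma(x))$ via the generalized-energy bound \eqref{eq.uacube.2}. The "asymmetric split" you single out for the mixed term is exactly the argument in the paper.
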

\begin{proof}
Let $x \in \mathbb{X}$ be arbitrary, choose $u_x \in \ubgec(x)$ and define for readability 
$\sigma(n)=\sigma\big(\phi(n;x,u_x)\big)$ and $\rho(n)=\rho\big(u_x(n)\big)$. 
For all $n\geq 0$ we then have
\begin{align*}
s(\phi(n;x,u_x),u_x(n)) & \leq C_1 \gamma_2^{-1}(\sigma(n)) + C_2\eta(\rho(n)) \\
	& \hspace{0.5cm} +  C_3\gamma_2^{-1}(\sigma(n))\alpha(\rho(n)).
\end{align*}
Continuing with
\begin{align*}
\gamma_2^{-1}(\sigma(n)) & \leq \gamma_2^{-1}(\beta(\sigma(x),n)) \leq \theta^n \gamma_1(\sigma(x)),
\end{align*}
where we used the $\cKL$-decomposition from Theorem \ref{thm.stagecostFromUAC}, and
\begin{align*}
\alpha(\rho(n)) & = (\alpha \circ \eta^{-1})\big(\eta(\rho(n))\big) \leq (\alpha \circ \eta^{-1} \circ \gamma)(\sigma(x)),
\end{align*}
where we used again \eqref{eq.uacube.2}, 
leads to
\begin{align*}
s(\phi(n;x,u_x),u_x(n)) & \leq C_1\theta^n \gamma_1(\sigma(x)) + C_2 \eta(\rho(n)) \\
	& \hspace{-0.5cm} + C_3\theta^n\gamma_1(\sigma(x))\cdot(\alpha \circ \eta^{-1} \circ \gamma)(\sigma(x))
\end{align*}
Summing over $n$ yields
\begin{align*}
\sum_{n=0}^{N-1} s(\phi(n;x,u_x), u_x(n)) & \leq \frac{C_1}{1-\theta}\gamma_1(\sigma(x)) + C_2\gamma(\sigma(x)) \\
	& \hspace{-0.8cm} + \frac{C_3}{1-\theta}\gamma_1(\sigma(x)) \cdot (\alpha \circ \eta^{-1} \circ \gamma)(\sigma(x))
\end{align*}
and the claim follows by combining this with the results from Theorem \ref{thm.stagecostFromUAC}.
\end{proof}
%
%
\begin{example} \label{ex.simpleinteractionterms}
Consider the situation of Theorem \ref{thm.stagecostFromUAC} for the case of UACwUBgEC. The goal is to find functions $s$ compatible with Proposition \ref{prop.simpleinteractionterms}.

\emph{1)} Any function $s(x,u)=\alpha_1(\sigma(x))\cdot\alpha_2(\rho(u))$ with $\alpha_1,\alpha_2\in\mathcal{K}_\infty$ works,  if $\alpha_1 \leq C \cdot \gamma_2^{-1}$ for some $C>0$. 
In this case we can set (using the notation of Proposition \ref{prop.simpleinteractionterms}) $C_1=C_2=0$, $C_3=C$, $\alpha=\alpha_2$, which results in the total cost bound
\begin{equation*}
\bar{\alpha} = \frac{C_q}{1-\theta} \gamma_1 + C_r \gamma + \frac{C}{1-\theta} \gamma_1 \cdot \left(\alpha_2 \circ \eta^{-1} \circ \gamma\right).
\end{equation*}

\emph{2)} Let $s(x,u) = \alpha_3\big(\alpha_1(\sigma(x)) + \alpha_2(\rho(u))\big)$ with $\alpha_1,\alpha_2,\alpha_3 \in \mathcal{K}_\infty$ such that
\begin{enumerate}
\item $\exists C_1 >0$ such that $\alpha_3 \circ (2\alpha_1) \leq C_1 \gamma_2^{-1}$
\item $\exists C_2>0$ such that $\alpha_3 \circ (2\alpha_2) \leq C_2 \eta$
\end{enumerate}
For example, we can choose arbitrary $\alpha_1,\alpha_2\in\mathcal{K}_\infty$ and then set
\begin{equation*}
\alpha_3(s) = \min\{ \gamma_2^{-1}(\alpha_1^{-1}(1/2s)), \eta(\alpha_2^{-1}(1/2s)) \}.
\end{equation*}
Using the weak triangle inequality for $\mathcal{K}_\infty$-functions, we get
\begin{align*}
s(x,u) & \leq \left( \alpha_3 \circ 2\alpha_1 \right)(\sigma(x)) + \left( \alpha_3 \circ 2\alpha_2 \right)(\rho(u)) \\
& \leq C_1 \gamma_2^{-1}(\sigma(x)) + C_2 \eta(\rho(u))
\end{align*}
and hence the total cost bound
\begin{equation*}
\bar{\alpha} = \frac{C_q + C_1}{1-\theta}\gamma_1 + (C_r + C_2)\gamma.
\end{equation*}
\end{example}
The next result allows an even greater degree of flexibility in designing stage costs that lead to UCC.
It is hinted at in the discussion after \cite[Theorem~4.3]{GruenePannek17}, however, the precise details are not given there. Furthermore, we state the result in the more general case of UACwUBgEC and using the state and input measures $\sigma$ and $\rho$.
\begin{theorem}
Let system \eqref{eq.sys} be UACwUBgEC on $\mathbb{X}$ and $\ell: X \times U \rightarrow \Rnn$ any stage cost such that
there are $R_\sigma \in \Rp$, $C_1,C_2,C_3 \in \Rnn$, $\alpha \in \cKi$ and $\alpha_x,\alpha_u \in \cKi$ such that
for all $x \in \mathbb{X}$ with $\sigma(x) < R_\sigma$ and all $u \in U$
\begin{equation}
\ell(x,u) \leq C_1\gamma_2^{-1}(\sigma(x)) + C_2 \eta(\rho(u)) + C_3\gamma_2^{-1}(\sigma(x))\alpha(\rho(u))
\end{equation}
and 
for all $x \in \mathbb{X}$ with $\sigma(x)\geq R_\sigma$ and all $u \in U$
\begin{equation} \label{eq.interactiontransientbound}
\ell(x,u) \leq \alpha_x(\sigma(x)) + \alpha_u(\rho(u)).
\end{equation}
Then system \eqref{eq.sys} is UCC on $\mathbb{X}$ w.r.t. $\ell$.
\end{theorem}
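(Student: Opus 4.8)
The plan is to fix an arbitrary $x \in \mathbb{X}$, take a control $u_x \in \ubgec(x)$ supplied by UACwUBgEC, write $\phi(n) := \phi(n;x,u_x)$ for brevity, and bound $J_\infty(x,u_x\mid\ell)$ by splitting the time axis according to whether the trajectory lies in the region $\{\sigma < R_\sigma\}$ or not. First I would fix the $\cKL$-decomposition $\beta(r,n) \le \gamma_2(\theta^n\gamma_1(r))$ (Sontag's Lemma) underlying the hypothesis, i.e.\ with the same $\gamma_2,\eta$ appearing there, and record two standing estimates along the trajectory: from $\sigma(\phi(n)) \le \beta(\sigma(x),n)$ and monotonicity of $\gamma_2^{-1}$ one gets $\gamma_2^{-1}(\sigma(\phi(n))) \le \theta^n\gamma_1(\sigma(x))$ for every $n$, and from the summability estimate $\eta(\rho(u_x(n))) \le \gamma(\sigma(x))$, hence $\rho(u_x(n)) \le \eta^{-1}(\gamma(\sigma(x)))$, for every $n$. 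I would then introduce the entry time $N^\ast(\sigma(x)) := \min\{\,n \ge 0 : \theta^n\gamma_1(\sigma(x)) < \gamma_2^{-1}(R_\sigma)\,\}$; since $\gamma_2$ is strictly increasing this forces $\sigma(\phi(m)) \le \beta(\sigma(x),m) < R_\sigma$ for all $m \ge N^\ast(\sigma(x))$, so the \emph{far} index set $S_1 := \{\,n : \sigma(\phi(n)) \ge R_\sigma\,\}$ satisfies $S_1 \subseteq \{0,\dots,N^\ast(\sigma(x))-1\}$, while on the complementary \emph{near} set $S_2$ the first hypothesis on $\ell$ applies.

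Next I would bound the two parts separately. On $S_2$ the estimate is essentially the one behind Theorem~\ref{thm.stagecostFromUAC} and Proposition~\ref{prop.simpleinteractionterms}: substitute the two standing estimates together with $\alpha(\rho(u_x(n))) = (\alpha\circ\eta^{-1})\big(\eta(\rho(u_x(n)))\big) \le (\alpha\circ\eta^{-1}\circ\gamma)(\sigma(x))$ into the first hypothesis, then sum over $n\in S_2$ using $\sum_{n\ge 0}\theta^n = \frac{1}{1-\theta}$ and $\sum_{n\ge 0}\eta(\rho(u_x(n))) \le \gamma(\sigma(x))$; this bounds $\sum_{n\in S_2}\ell(\phi(n),u_x(n))$ by $\frac{C_1}{1-\theta}\gamma_1(\sigma(x)) + C_2\gamma(\sigma(x)) + \frac{C_3}{1-\theta}\gamma_1(\sigma(x))\cdot(\alpha\circ\eta^{-1}\circ\gamma)(\sigma(x))$, a $\cKi$-function of $\sigma(x)$ (products and compositions of $\cKi$-functions and their positive linear combinations stay in $\cKi$ by Lemma~1; a summand carrying a zero constant is just dropped). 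On $S_1$ I would use \eqref{eq.interactiontransientbound} with $\sigma(\phi(n)) \le \beta(\sigma(x),n) \le \beta(\sigma(x),0)$ and $\rho(u_x(n)) \le \eta^{-1}(\gamma(\sigma(x)))$ to bound every term by $G(\sigma(x))$, where $G := \alpha_x\circ\beta(\cdot,0) + \alpha_u\circ\eta^{-1}\circ\gamma \in \cKi$; since $|S_1| \le N^\ast(\sigma(x))$ this gives $\sum_{n\in S_1}\ell(\phi(n),u_x(n)) \le N^\ast(\sigma(x))\,G(\sigma(x))$.

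The remaining --- and only nonroutine --- step is to absorb $N^\ast(\sigma(x))\,G(\sigma(x))$, hence the whole bound, into a single $\cKi$-function. From the definition of $N^\ast$ and $\theta\in(0,1)$ one reads off the explicit estimate $N^\ast(r) \le 1 + \frac{1}{\log(1/\theta)}\log\big(1+\gamma_1(r)/\gamma_2^{-1}(R_\sigma)\big) =: 1 + B(r)$, and I would check that $B \in \cKi$ (it is continuous, strictly increasing, vanishes at $0$ since $\gamma_1(0)=0$, and is unbounded since $\gamma_1$ is and $R_\sigma>0$, so $\gamma_2^{-1}(R_\sigma)>0$). Then $N^\ast(\sigma(x))\,G(\sigma(x)) \le G(\sigma(x)) + B(\sigma(x))\,G(\sigma(x))$ with $B\cdot G\in\cKi$ by Lemma~1, and summing the near and far bounds yields $J_\infty(x,u_x\mid\ell) \le \bar{\alpha}(\sigma(x))$ with $\bar{\alpha} := G + B\cdot G + \frac{C_1}{1-\theta}\gamma_1 + C_2\gamma + \frac{C_3}{1-\theta}\gamma_1\cdot(\alpha\circ\eta^{-1}\circ\gamma) \in \cKi$, which is exactly the asserted UCC bound. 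I expect this last point to be the main obstacle: the number of transient (far) steps does grow with $\sigma(x)$ --- so the transient contribution looks at first sight unbounded --- but it grows only logarithmically, so its product with the $\cKi$-bound $G$ on each transient stage cost is still a $\cKi$-function; relatedly, the key observation making the $S_2$-sum routine is that $\gamma_2^{-1}(\sigma(\phi(n)))$ decays geometrically for every $n$, not merely after the entry time $N^\ast(\sigma(x))$.
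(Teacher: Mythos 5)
Your proof is correct and follows essentially the same route as the paper's: split the time axis into the finitely many ``far'' indices (at most $N^\ast(\sigma(x))$ of them, each stage cost bounded by a fixed $\cKi$-function of $\sigma(x)$) and the ``near'' indices (handled exactly as in Proposition~\ref{prop.simpleinteractionterms}). The only difference is that you make explicit, via the logarithmic bound $N^\ast(r)\leq 1+B(r)$ with $B\in\cKi$, the final step that the paper dispatches with ``using standard arguments we can now upper bound $\tilde{\alpha}_1+\tilde{\alpha}_2$ by a function $\bar{\alpha}\in\cKi$''; this is a welcome addition, not a deviation.
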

\begin{proof}
For each $R \in \Rnn$ there exists $N(R) \in \mathbb{N}$ such that $\beta(R,n) < R_\sigma$ for all $n\geq N(R)$.
We can choose the function $N: \Rnn \rightarrow \mathbb{N}$ to be nondecreasing.
Let $x \in \mathbb{X}$ be arbitrary, select $u_x \in \ubgec(x)$ and to avoid notational clutter, define $\ell(n)=\ell(\phi(n;x,u_x),u_x(n))$, $\sigma(n)=\sigma(\phi(n;x,u_x))$ and $\rho(n)=\rho(u_x(n))$.
Let $N \geq 1$ be arbitrary and define
\begin{align*}
\mathcal{I}_1 & = \{ n \in \mathbb{N} \mid \sigma(\phi(n;x,u_x)) \geq R_\sigma,\: n < N\} \\
\mathcal{I}_2 & = \{ n \in \mathbb{N} \mid \sigma(\phi(n;x,u_x)) < R_\sigma,\: n < N\}.
\end{align*}
By construction, the set $\mathcal{I}_1$ has at most $N(\sigma(x))$ elements and we have the partition
\begin{align*}
\sum_{n=0}^{N-1} \ell(n) = \sum_{n \in \mathcal{I}_1} \ell(n) + \sum_{n \in \mathcal{I}_2} \ell(n).
\end{align*}
For $n \in \mathcal{I}_1$ we find (since $\sigma(n) \geq R_\sigma$)
\begin{align*}
\ell(n) & \leq \alpha_x(\sigma(n)) + \alpha_u(\rho(n)) \\
& \leq \alpha_x( \beta(\sigma(x),n)) + (\alpha_u \circ \eta^{-1})\big(\eta(\rho(n))\big) \\
& \leq \alpha_x(\beta(\sigma(x),0)) + (\alpha_u \circ \eta^{-1} \circ \gamma)(\sigma(x)),
\end{align*}
hence
\begin{equation*}
\sum_{n\in\mathcal{I}_1} \ell(n) \leq \tilde{\alpha}_1(\sigma(x))
\end{equation*}
with
\begin{equation*}
\tilde{\alpha}_1(r) = N(r)\left(\alpha_x(\beta(r,0)) + (\alpha_u \circ \eta^{-1} \circ \gamma)(r)\right).
\end{equation*}
Since $\sigma(n) < R_\sigma$ for all $n \in \mathcal{I}_2$, we get from the proof of Proposition \ref{prop.simpleinteractionterms}
\begin{align*}
\sum_{n\in\mathcal{I}_2} \ell(n)&  \leq \frac{C_1}{1-\theta}\gamma_1(\sigma(x)) + C_2\gamma(\sigma(x)) \\
	& \hspace{0.5cm} + \frac{C_3}{1-\theta}\gamma_1(\sigma(x))(\alpha \circ \eta^{-1} \circ \gamma)(\sigma(x)) \\
	& =: \tilde{\alpha}_2(\sigma(x)).
\end{align*}
Altogether, we get
\begin{equation*}
\sum_{n=0}^{N-1} \ell(n) \leq \tilde{\alpha}_1(\sigma(x)) + \tilde{\alpha}_2(\sigma(x)).
\end{equation*}
Using standard arguments we can now upper bound $\tilde{\alpha}_1 + \tilde{\alpha}_2$ by a function $\bar{\alpha}\in\cKi$.
\end{proof}
\begin{remark}
\emph{1)} The bound \eqref{eq.interactiontransientbound} is only relevant for states with large state measure $\sigma$, corresponding to the transient behavior in the context of stabilization. Therefore, a rough bound of this form is enough. In particular, using the additive form considered here does not lead to a loss in generality. 
For example, assume we have a bound $\ell(x,u)\leq B(\sigma(x),\rho(u))$ in \eqref{eq.interactiontransientbound} instead, where $B: \Rnn \times \Rnn \rightarrow \Rnn$ is continuous and nondecreasing in the natural partial order on $\Rnn^2$ (i.e., $t_i \leq t_i^\prime$, $i=1,2$, implies $B(t_1,t_2)\leq B(t_1^\prime,t_2^\prime)$). Using arguments similar to \cite[Lemma~33]{Kellett14}, one can then use a bound of the form \eqref{eq.interactiontransientbound} again.

\emph{2)} In the bound \eqref{eq.interactiontransientbound}, the behaviour of $\alpha_x\lvert_{[0,R_\sigma)}$ does not matter. In particular, this justifies using only the term $\alpha_u(\rho(u))$ to include penalization of $\rho(u)$. For example, if we want to penalize $\rho(u)=0$ if $\sigma(x)\geq R_\sigma$, then this can be included in the term $\alpha_x$.
\end{remark}
\section{A CONVERSE RESULT} \label{sec.converse}
In order to allow flexibility in the design of stage costs, it is desirable that the techniques described in the previous section yield a large range of stage costs. In particular, the question arises whether the controllability conditions used to derive stage costs are much stronger than UCC with respect to a stage cost of a certain type. Since under mild assumptions a stage cost as constructed in Theorem \ref{thm.stagecostFromUAC} can be used to derive a stabilizing controller, cf. \cite[Section~4.3]{GruenePannek17}, it is clear that UCC implies some form of controllability. The next result provides, under an additional controlled-invariance assumption, a converse to Theorem \ref{thm.stagecostFromUAC}. 
\begin{theorem} \label{thm.converse}
Let $\mathbb{X}\subseteq X$, $\ell(x,u)=q(\sigma(x))+r(\rho(u))$ and $\bar{\alpha}\in\cKi$, where $q\in\cKi$ and $r:\Rnn\rightarrow\Rnn$, such that for all $x \in \mathbb{X}$ there exists $u \in U^\infty$ with
\begin{enumerate}
\item $\forall n\geq 0: \phi(n;x,u)\in\mathbb{X}$
\item $J_\infty(x,u\mid \ell) \leq \bar{\alpha}(\sigma(x))$
\end{enumerate}
Denote the set of all such $u$ by $\ucccpi(x)$.
Then there exists $\beta \in \cKL$, $\eta:\Rnn\rightarrow\Rnn$ and $\gamma\in\cKi$ such that for all $x \in \mathbb{X}$ there exists $u_x\in U^\infty$ with
\begin{enumerate}
\item $\sigma(\phi(n;x,u_x))\leq \beta(\sigma(x),n),\: \forall n\geq 0$
\item $\sum_{n=0}^{N-1} \eta\big(\rho(u_x(n))\big) \leq \gamma(\sigma(x)),\:\forall N \geq 1$
\end{enumerate}
If $r \in \cKi$, then we can choose $\eta \in \cKi$, i.e., the system is UACwUBgEC on $\mathbb{X}$.
\end{theorem}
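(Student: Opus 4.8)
The plan is to construct, for each $x \in \mathbb{X}$, the witness control $u_x$ directly as a near-optimal (say, within a factor of $2$, or simply a choice achieving the UCC bound) control from $\ucccpi(x)$, and then extract the two required estimates from the single inequality $J_\infty(x,u_x \mid \ell) \leq \bar{\alpha}(\sigma(x))$. The key observation is that since $\ell(x,u) = q(\sigma(x)) + r(\rho(u))$ with $q \in \cKi$ and $r \geq 0$, the total-cost bound immediately gives two separate consequences: first, $\sum_{n=0}^{N-1} q\big(\sigma(\phi(n;x,u_x))\big) \leq \bar\alpha(\sigma(x))$ for all $N$, which bounds the tail decay of the state; and second, $\sum_{n=0}^{N-1} r\big(\rho(u_x(n))\big) \leq \bar\alpha(\sigma(x))$ for all $N$, which is already almost exactly the generalized-energy condition (2) with the choice $\eta = r$ and $\gamma = \bar\alpha$. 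This handles the final sentence of the statement: if $r \in \cKi$ we may take $\eta = r \in \cKi$ and $\gamma = \bar\alpha$, so only the state estimate (1) remains to be produced in all cases.

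For the state estimate, the main work is to turn the summability bound $\sum_{n\geq 0} q(\sigma(n)) \leq \bar\alpha(\sigma(x))$ (writing $\sigma(n) = \sigma(\phi(n;x,u_x))$) into a $\cKL$-bound $\sigma(n) \leq \beta(\sigma(x),n)$. Here I would use the controlled-invariance assumption (1) of the hypothesis, $\phi(n;x,u_x) \in \mathbb{X}$: it lets me bound the state at any time $n$ by restarting the argument from $\phi(n;x,u_x)$ as a new initial condition, but more simply it guarantees each $\sigma(n)$ is itself a value to which the estimate applies. The standard trick is: from summability, $q(\sigma(n)) \to 0$, and for any threshold the number of indices $n$ with $q(\sigma(n)) > \varepsilon$ is at most $\bar\alpha(\sigma(x))/\varepsilon$; combined with a monotonicity/invariance argument along the trajectory one upgrades pointwise convergence to a uniform rate. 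Concretely, I expect to show $q(\sigma(n)) \leq \bar\alpha(\sigma(x))/(n+1)$ fails in general without extra structure, so instead I would invoke that controlled invariance plus the UCC property holding from every reachable state yields, by a telescoping/sub-level-set argument, a bound of the form $q(\sigma(n)) \leq \mu(\sigma(x))\,\nu(n)$ for suitable $\mu \in \cKi$ and $\nu$ decreasing to $0$; applying $q^{-1} \in \cKi$ and then majorizing gives the desired $\beta \in \cKL$.

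The main obstacle I anticipate is precisely this last upgrade from summability to a uniform $\cKL$-rate: summable, nonnegative sequences need not decay monotonically, so one genuinely needs the controlled-invariance hypothesis (to re-apply UCC at later times) to force a rate that is uniform over the $\mathbb{X}$-sublevel set $\{\sigma(x) \leq R\}$. A clean way to do this is: for $x$ with $\sigma(x) \leq R$, at every time $n$ the "remaining cost" $J_\infty(\phi(n;x,u_x), u_x(\cdot + n) \mid \ell)$ need not be controlled by $u_x$ itself, but we can instead re-select a control from $\ucccpi(\phi(n;x,u_x))$; arguing that $\sigma(n)$ is small whenever the cost-to-go at some nearby earlier time was small, and that the total budget $\bar\alpha(\sigma(x))$ caps how often the cost-to-go can be large, yields the uniform decay estimate. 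Assembling these pieces — choosing $\beta(r,n)$ as an explicit majorant built from $q^{-1}$, $\bar\alpha$ and the counting bound, then verifying $\beta \in \cKL$ via Lemma parts (1)–(3) and Sontag's lemma in reverse — completes the argument. The positive-definite-vs-$\cKi$ dichotomy for $\eta$ falls out for free since $\eta = r$ inherits whichever property $r$ has.
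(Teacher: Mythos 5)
Your overall strategy is the same as the paper's -- split the cost bound into the $q$-part and the $r$-part, use a counting argument on $\sum_n q(\sigma(\phi(n;x,u)))\leq\bar{\alpha}(\sigma(x))$ to locate times where the state is small, and exploit controlled invariance to restart from there -- but there are two genuine gaps in how you propose to close the argument. First, the counting bound only tells you that \emph{at most} $\bar{\alpha}(R)/\varepsilon$ indices violate $q(\sigma(n))>\varepsilon$; these indices can occur arbitrarily late, so the original $u_x\in\ucccpi(x)$ alone never yields a uniform rate, and a restart genuinely changes the control sequence. A single restart (pick $n_q\leq N(R,\varepsilon)$ with $\sigma(\phi(n_q))$ small, then append some $u_2\in\ucccpi(\phi(n_q))$) gives $\sigma(\phi(n))<\varepsilon$ for all $n\geq N(R,\varepsilon)$, but the resulting control depends on $\varepsilon$. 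To obtain one control $u_x$ satisfying a full $\cKL$ bound you must iterate this over a sequence $\epsilon_m\downarrow 0$ and concatenate infinitely many restarts; this diagonal construction is the core of the paper's proof (its Step 3) and is absent from your plan.

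Second, and consequently, your claim that ``$\eta=r$, $\gamma=\bar{\alpha}$ handles condition (2) for free'' fails: the concatenated control is no longer in $\ucccpi(x)$, since each restart adds up to $\bar{\alpha}(\gamma_\sigma(\sigma(x)))$ of extra cost with $\gamma_\sigma=q^{-1}\circ\bar{\alpha}$, and infinitely many restarts could make the total cost diverge -- which would destroy exactly the generalized-energy bound you are trying to establish. The paper resolves this by shrinking the restart thresholds fast enough that the $m$-th segment costs at most $2^{-m}\bar{\alpha}(R)$, so the final total cost is bounded by $\hat{\alpha}=\tilde{\alpha}+\bar{\alpha}$ with $\tilde{\alpha}=\bar{\alpha}+\bar{\alpha}\circ\gamma_\sigma$, and one takes $\gamma=\hat{\alpha}$, not $\bar{\alpha}$. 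Both bounds must be verified for the \emph{same} final control; your plan implicitly uses the original control for (2) and a modified one for (1). You would also still need the final step of assembling the family $N_R(\epsilon)$ together with the uniform bound $\gamma_\sigma$ into an actual $\beta\in\cKL$ (e.g.\ via the standard integral-majorant construction), which you gesture at but do not supply.
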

\begin{proof}
\textbf{Step 1} For all $x \in \mathbb{X}$ and $u \in \ucccpi(x)$ we have for all $n\geq 0$
\begin{equation*}
\sigma(\phi(n;x,u)) \leq \gamma_\sigma(\sigma(x))
\end{equation*}
for $\gamma_\sigma = q^{-1} \circ \bar{\alpha} \in \cKi$.
To see this, let $x \in \mathbb{X}$, $u \in \ucccpi(x)$ and $n\geq 0$ be arbitrary, then
\begin{equation*}
q\big(\sigma(\phi(n;x,u))\big) \leq J_\infty(x,u\mid \ell) \leq \bar{\alpha}(\sigma(x)),
\end{equation*}
and applying $q^{-1}\in \cKi$ to both sides leads to
\begin{equation*}
\sigma(\phi(n;x,u)) \leq \left(q^{-1} \circ \bar{\alpha} \right)(\sigma(x)).
\end{equation*}
\textbf{Step 2} Claim: There exists $\tilde{\alpha} \in \cKi$ such that for all $R,\epsilon\in\Rp$ there exists $N=N(R,\epsilon)\in\mathbb{N}$ such that for all $x\in\mathbb{X}$ with $\sigma(x)\leq R$ there is $u=u(x,\epsilon)\in U^\infty$ with
\begin{description}
\item[(i)] $J_\infty(x,u\mid \ell) \leq \tilde{\alpha}(\sigma(x))$
\item[(ii)] $\forall n\geq 0: \phi(n;x,u)\in\mathbb{X}$
\item[(iii)] $\forall n \geq N: \sigma(\phi(n;x,u)) < \epsilon$
\end{description}
Proof of claim: For this we use an argument from \cite[Section~4.2]{TeelEtAl10}. Define $\tilde{\alpha} = \bar{\alpha} + \bar{\alpha} \circ \gamma_\sigma$.
Let $R,\epsilon \in\Rp$ be arbitrary, choose some $0<\tilde{\epsilon} < \epsilon$ and let $N=N(R,\epsilon)$ be the smallest positive integer with
\begin{equation*}
N \geq \frac{\bar{\alpha}(R)}{q\left(\gamma_\sigma^{-1}(\tilde{\epsilon})\right)}-1.
\end{equation*}
Let $x \in \mathbb{X}$ with $\sigma(x)\leq R$ be arbitrary and choose some $u_1\in\ucccpi(x)$.
Then there exists $n_q \in \{ 0,1,\ldots,N\}$ such that $q\big(\sigma(\phi(n_q))\big)\leq q\left(\gamma_\sigma^{-1}(\tilde{\epsilon})\right)$, where we defined $\phi(n_q)=\phi(n_q;x,u_1)$ to avoid notational clutter. Otherwise,
\begin{align*}
\bar{\alpha}(\sigma(x)) & \geq J_\infty(x,u_1\mid \ell) \geq \sum_{n=0}^N q(\sigma(\phi(n;x,u_1))) \\
& > (N+1)q\left(\gamma_\sigma^{-1}(\tilde{\epsilon})\right) \geq \bar{\alpha}(R) \geq \bar{\alpha}(\sigma(x)),
\end{align*}
a contradiction. 
Therefore we have $q\big(\sigma(\phi(n_q))\big)\leq q\left(\gamma_\sigma^{-1}(\tilde{\epsilon})\right)$ and
because $q \in \cKi$, also $\sigma(\phi(n;x,u_1))\leq \gamma_\sigma^{-1}(\tilde{\epsilon})$.
Since $u_1\in\ucccpi(x)$ ensures $\phi(n_q;x,u_1)\in\mathbb{X}$, we can choose some $u_2\in\ucccpi(\phi(n_q;x,u_1))$.
Define now
\begin{equation*}
u =
\begin{pmatrix}
u_1(0) & \cdots & u_1(n_q-1) & u_2(0) & u_2(1) & \cdots
\end{pmatrix},
\end{equation*} 
then
\begin{align*}
J_\infty(x,u\mid \ell) & = \sum_{n=0}^{n_q-1} \ell(\phi(n;x,u_1),u_1(n)) \\
& \hspace{0.5cm} +  \sum_{n=0}^\infty \ell(\phi(n; \phi(n_q),u_2), u_2(n)) \\
& \leq J_\infty(x,u_1 \mid \ell) + J_\infty(\phi(n_q),u_2 \mid \ell) \\
& \leq \bar{\alpha}(\sigma(x)) + \bar{\alpha}(\sigma(\phi(n_q))) \\
& \leq (\bar{\alpha} + \bar{\alpha} \circ \gamma_\sigma)(\sigma(x)) = \tilde{\alpha}(\sigma(x))
\end{align*}
where we used $\sigma(\phi(n_q))\leq \gamma_\sigma(\sigma(x))$ in the last line.
This establishes part (i) of the claim.
By construction, for all $n=0,\ldots,n_q$ we have $\phi(n;x,u)=\phi(n;x,u_1)\in\mathbb{X}$, and for $n\geq n_q$ also $\phi(n;x,u) = \phi(n-n_q;\phi(n_q),u_2) \in \mathbb{X}$, which shows part (ii) of the claim.
Finally, for all $n\geq n_q$ we have
\begin{align*}
\sigma(\phi(n;x,u)) & = \sigma(\phi(n-n_q;\phi(n_q),u_2)) \\
& \leq \gamma_\sigma\big(\sigma(\phi(n_q;x,u_1))\big) \leq \tilde{\epsilon} < \epsilon,
\end{align*}
and since $n_q \leq N(R,\epsilon)$, we get that for all $n\geq N(R,\epsilon)$ $\sigma(\phi(n;x,u))<\epsilon$, which finishes the proof of the claim.

\textbf{Step 3} Let $R \in \Rp$ be arbitrary, let $(\epsilon_n)_{n\in\mathbb{N}_+}$ be a sequence of real numbers such that 
$0 < \epsilon_m \leq 1$ and $\epsilon_m > \epsilon_{m+1}$ for all $m\geq 1$ and $\lim_{m\rightarrow \infty} \epsilon_m = 0$.
Define a new sequence by
\begin{equation*}
\tilde{\epsilon}_m := \min\{ (\tilde{\alpha}^{-1} \circ q)(\epsilon_m), \tilde{\alpha}^{-1}(2^{-m} \bar{\alpha}(R)), R\}.
\end{equation*}
Finally, define $M_n = \sum_{m=1}^n N(R,\tilde{\epsilon}_m)$.
Let $x \in \mathbb{X}$ with $\sigma(x)\leq R$ be arbitrary.
Define now
\begin{align*}
\phi_0 & = x \\
u_0 &= u(x,\tilde{\epsilon}_1)\lvert{\{0,\ldots,N(R,\tilde{\epsilon}_1)-1\}} \\
\phi_1 & = \phi(N(R,\tilde{\epsilon}_1);x,u_0)
\end{align*}
where $u(x,\tilde{\epsilon}_1)$ is the input sequence from Step 2.
This implies that
\begin{equation*}
\sigma(\phi_1)=\sigma(\phi(N(R,\tilde{\epsilon}_1);x,u_0)) < \tilde{\epsilon}_1 \leq R
\end{equation*}
and $\phi_1 \in \mathbb{X}$.
We can now proceed inductively. 
Define for $n\geq 1$
\begin{align*}
u_n &= u(\phi_n,\tilde{\epsilon}_{n+1})\lvert{\{0,\ldots,N(R,\tilde{\epsilon}_{n+1})-1\}} \\
\phi_{n+1} & = \phi(N(R,\tilde{\epsilon}_{n+1});\phi_n,u_n)
\end{align*}
and then $u = 
\begin{pmatrix}
u_0 & u_1 & \cdots 
\end{pmatrix}\in U^\infty$.
Claim: (i) For all $m \geq 1$ and $n\geq M_m$ we have $\sigma(\phi(n;x,u))< \epsilon_m$.
(ii) $J_\infty(x,u\mid \ell)\leq \hat{\alpha}(R)$, where  $\hat{\alpha} = \tilde{\alpha} + \bar{\alpha}$.

Proof of claim: Let $m\geq 1$ and $n\geq M_m$ be arbitrary. Let $m_\ast$ be the largest positive integer such that $n \geq M_{m_\ast}$, then
\begin{align*}
\sigma(\phi(n;x,u)) & = \sigma(\phi(n-M_{m_\ast};\phi_{m_\ast},u_{m_\ast})) \\
& \leq q^{-1}\left( J_\infty(\phi_{m_\ast}, u(\phi_{m_\ast},\tilde{\epsilon}_{m_\ast+1}) \mid \ell)\right) \\
& \leq q^{-1}\left(\tilde{\alpha}(\sigma(\phi_{m_\ast})) \right) \\
& < q^{-1}\left( \tilde{\alpha}(\tilde{\epsilon}_{m_\ast}) \right) \\
& \leq q^{-1}\left( \tilde{\alpha}\left( (\tilde{\alpha}^{-1} \circ q)(\epsilon_{m_\ast})\right) \right) \\
& \leq \epsilon_{m_\ast} \leq \epsilon_m,
\end{align*}
where we used 
$\ell\geq 0$ and the choice of $u_{m_\ast}$ in the first inequality, 
property (i) of $u_{m_\ast}$ established in Step 2 for the second inequality, 
the inductive definition of $\phi_{m_\ast}$ for the third inequality, 
the definition of $\tilde{\epsilon}_{m_\ast}$ in the fourth inequality 
and finally $m_\ast \geq m$. Altogether, this shows (i). Next,
\begin{align*}
J_\infty(x,u \mid \ell) & = \sum_{m=0}^\infty \sum_{n=0}^{N(R,\tilde{\epsilon}_{m+1})-1} \ell(\phi(n;\phi_m,u_m),u_m(n)) \\
& \leq  \sum_{m=0}^\infty J_\infty(\phi_m,u(\phi_m,\tilde{\epsilon}_{m+1}) \mid \ell) \\
& <  \sum_{m=0}^\infty \tilde{\alpha}(\tilde{\epsilon}_m) \\
& \leq \tilde{\alpha}(\sigma(x)) + \sum_{m=1}^\infty \tilde{\alpha}\left( \tilde{\alpha}^{-1}\left(2^{-m} \bar{\alpha}(R)\right) \right) \\
& \leq (\tilde{\alpha} + \bar{\alpha})(R) = \hat{\alpha}(R),
\end{align*}
establishing (ii).

\textbf{Step 4} Fix any sequence $(\epsilon_m)_{m \in \mathbb{N}_+}$ as required by Step 3 ($\epsilon_m=1/m$ works for example). 
For any $R \in \Rp$ define $N_R:\Rp\rightarrow \mathbb{N}$ by $N_R(\epsilon)=M_{m_\ast}$, where $m_\ast$ is the smallest integer such that $\epsilon_{m_\ast} < \epsilon$ (exists since $\epsilon_m \rightarrow 0$). 
Step 3 then shows that for all $x \in \mathbb{X}$ there exists $u_x \in U^\infty$ with $J_\infty(x,u_x\mid \ell) \leq \hat{\alpha}(\sigma(x))$ and for all $\epsilon \in \Rp$ and $n \geq N_{\sigma(x)}(\epsilon)$ also $\sigma(\phi(n;x,u_x))\leq \epsilon$ (simply set $R=\sigma(x)$ in Step 3 and use the input $u$ constructed there). 
Since
\begin{equation*}
\sum_{n=0}^\infty r\big(\rho(u_x(n))\big) \leq J_\infty(x,u_x \mid \ell) \leq \hat{\alpha}(\sigma(x)),
\end{equation*}
we can set $\eta=r$ and $\gamma=\hat{\alpha}$. Furthermore, by construction, for all $R,\epsilon \in \Rp$ we have $N_R(\epsilon) \in \mathbb{N}$, $N_R(\cdot)$ nonincreasing and hence Borel-measurable, $\tilde{R} \mapsto N_{\tilde{R}}(\epsilon)$ nondecreasing.

\textbf{Step 5} 
We finish the proof using a standard argument.
Define for $R,\epsilon \in \Rp$
\begin{equation*}
\nu_R(\epsilon):= \frac{2}{\epsilon}\int_{\epsilon/2}^\epsilon N_R(s)\mathrm{d}s + \frac{R}{\epsilon}.
\end{equation*}
Using Step 4, we see that $\nu$ is well-defined and continuous, we have $\nu(R,\epsilon) \geq N_R(\epsilon)$ and $\nu(R,\cdot)$ is strictly decreasing with $\lim_{\epsilon\rightarrow 0}\nu(R,\epsilon)=0$. Hence for each $R\in\Rp$ the inverse $\nu_R^{-1}$ exists, is continuous, strictly decreasing and for all $n\geq 1$ we have $n = \nu_R(\nu_R^{-1}(n)) \geq N_R(\nu_R^{-1}(n))$. This means that for all $x \in \mathbb{X}$ and the input sequence $u_x$ from Step 4 we have $\sigma(\phi(n;x,u_x)) \leq \nu_R^{-1}(n)$ for all $n \geq 1$.
Finally, using the construction from \cite[Section~2.7.3]{Tran19} and \cite[Section~C.6]{Khalil02} we can build a suitable $\beta \in \cKL$ from $\gamma_\sigma$ and $(R,\epsilon)\mapsto \nu_R^{-1}(\epsilon)$ such that for all $x \in \mathbb{X}$ and corresponding $u_x \in U^\infty$ from Step 4, we have $\sigma(\phi(n;x,u_x))\leq \beta(\sigma(x),n)$ for all $n\geq 0$.
\end{proof}
\begin{corollary} \label{cor.converse}
System \eqref{eq.sys} is UACwUBgEC on $X$ if and only if it is UCC on $X$ with respect to a stage cost $\ell(x,u)=q(\sigma(x))+r(\rho(u))$ with $q,r \in \cKi$.
\end{corollary}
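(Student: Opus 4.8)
The plan is to obtain the corollary as an immediate consequence of Theorem~\ref{thm.stagecostFromUAC} and Theorem~\ref{thm.converse}, specialized to the case $\mathbb{X}=X$. No new construction is needed; the work is entirely in checking that the hypotheses of the two theorems match the two directions of the biconditional.

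For the direction ``UACwUBgEC $\Rightarrow$ UCC with $\ell=q\circ\sigma+r\circ\rho$, $q,r\in\cKi$'', I would start from the definition of UACwUBgEC on $X$ (Definition~\ref{def.uac}, item~3), which furnishes exactly the data $\beta\in\cKL$, $\eta\in\cKi$, $\gamma\in\cKi$ and control sequences $u_x$ satisfying hypotheses 1) and 2) of Theorem~\ref{thm.stagecostFromUAC} with $\mathbb{X}=X$. Invoking that theorem, and in particular its second additional claim (the case $\eta\in\cKi$, which yields $r\in\cKi$), produces $q\in\cKi$, $r\in\cKi$ and $\bar\alpha\in\cKi$ with the system UCC on $X$ w.r.t.\ $\ell(x,u)=q(\sigma(x))+r(\rho(u))$. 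This half is essentially a citation of Theorem~\ref{thm.stagecostFromUAC}.

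For the converse direction, I would apply Theorem~\ref{thm.converse} with $\mathbb{X}=X$. The point to note is that the controlled-invariance requirement ``$\forall n\geq 0:\phi(n;x,u)\in\mathbb{X}$'' in hypothesis 1) of that theorem is vacuous when $\mathbb{X}=X$, since every trajectory of \eqref{eq.sys} stays in the state space $X$ by definition of $f$. Hence the assumption of Theorem~\ref{thm.converse} reduces exactly to: for all $x\in X$ there is $u\in U^\infty$ with $J_\infty(x,u\mid\ell)\leq\bar\alpha(\sigma(x))$, i.e.\ UCC on $X$ w.r.t.\ the given $\ell$. Since we are assuming $r\in\cKi$, the final sentence of Theorem~\ref{thm.converse} gives $\eta\in\cKi$, i.e.\ the system is UACwUBgEC on $X$.

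The two implications together give the stated equivalence. There is no real obstacle here: the only thing to be careful about is the role of the set $\mathbb{X}$ — the converse in Theorem~\ref{thm.converse} needs the controlled-invariance hypothesis, which is why the corollary is phrased for $\mathbb{X}=X$ rather than a general $\mathbb{X}\subseteq X$; for a proper subset one would additionally need $\mathbb{X}$ to be controlled-invariant along the UCC-witnessing inputs, and that subtlety should be mentioned.
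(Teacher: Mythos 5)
Your proposal is correct and matches the paper's intended argument exactly: the corollary is stated without proof precisely because it is the combination of Theorem~\ref{thm.stagecostFromUAC} (with the $\eta\in\cKi$ case giving $r\in\cKi$) and Theorem~\ref{thm.converse} (with the controlled-invariance hypothesis vacuous for $\mathbb{X}=X$ and $r\in\cKi$ giving $\eta\in\cKi$). Your closing remark on why the statement is restricted to $\mathbb{X}=X$ is also the right observation.
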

An interesting feature of the proof of Theorem \ref{thm.converse} and hence of Corollary \ref{cor.converse} is that it is elementary (no auxiliary tools from Dynamic Programming or Lyapunov theory are used) and constructive (at least in principle explicit formulas are derived) nature.

\section{CONCLUSION} \label{sec.conclusion}
In this work, we generalized the constructive method described in \cite[Section~4.1]{GruenePannek17} for finding a stage cost for infinite horizon optimal control ensuring uniform cost controllability, and we investigated the required controllability conditions.
The generalization to the case of time-varying systems with corresponding uniformity assumptions is straightforward and has been omited due to space constraints. Furthermore, at least for summable nonnegative weighting sequences, the case of a total cost functional with weighting is also a straightforward extension of our results.
Interesting directions for further investigations include the development of more explicit techniques for manipulating comparison functions and corresponding numerical algorithms. Another interesting direction is the characterization of all stage costs with certain properties like prescribed growth rates or convexity, under given controllability assumptions.
\section*{ACKNOWLEDGEMENTS}
We thank L. Kreisk\"other, A. Gr\"afe, P.-F. Massiani and three anonymous reviewers for helpful comments.

\bibliographystyle{plain} 
\bibliography{fiedlertrimpe_med2022} 

\end{document}